\def\fakeend{\end{document}}
\newcommand{\ignore}[1]{}
\newcommand{\startClaims}{\setcounter{claim}{0}}
\newtheorem{theorem}{Theorem}[section]
\newtheorem{lemma}[theorem]{Lemma}
\newtheorem{claim}{Claim}
\let\c@figure\c@theorem
\title{Drawings of $K_n$ with the same rotation scheme\\ are the same up to Reidemeister moves\\ (Gioan's Theorem)\\ {\bf\small\textsc{In memory of our friend Dan Archdeacon.}}}
\author{Alan Arroyo\footnote{Supported by CONACYT. (GS by grant no.\ 222667)}\ ${}^+$,% Emeric Gioan${}^\ddagger$, 
\ Dan McQuillan${}^\ddagger$,\\  R.\ Bruce Richter\footnote{Supported by NSERC grant no.\ 41705-2014 057082.
\newline{\textcolor{white}{...}}
${}^+$University of Waterloo, %${}^\ddagger$ Laboratoire d'Informatique, de Robotique et de Micro\'electonique, Montpelier, 
${}^\ddagger$Norwich University, and ${}^\times$UASLP}\ ${}^+$, and Gelasio Salazar${}^*$${}^\times$}
\date{\LaTeX-ed: \today}
\newenvironment{proof}%
{\noindent{\bf Proof.}\ }%
{\hfill\eopf\par\bigskip}%
\def\eop{\hfill{{\rule[0ex]{7pt}{7pt}}}}
\newenvironment{cproofof}[1]
{\bigskip\noindent{\bf Proof of #1.}\startClaims\ }
{\hfill{\eop}\par\bigskip}
\newenvironment{cproof}
{\noindent{\bf Proof.}\startClaims\ }
{\hfill{\eop}\par\bigskip}
\def\i4c{{inter\-nally-4-con\-nec\-ted}}
\def\p4c{\wording{peri\-phe\-rally-4-con\-nec\-ted}}
\def\ei4c{\operatorname{\wording{p4c}}}
\def\2cc{{2-cros\-sing-cri\-tical}}
\def\m2{{{\cal M}_2^3}}
\newcommand{\eopf}{\raisebox{0.8ex}{\framebox{}}}
\newcommand{\majorrem}[1]{}
\newcommand{\minorrem}[1]{}
\newcommand{\wording}[1]{#1}
\newcommand{\wordingrem}[1]{}
\newcommand{\dragominorrem}[1]{}
\def\rtwo{\mathbb R^2}
\def\r5{r^*_{+5}}
\def\rightspine #1{{}_{#1}\kern-3pt\sqsubset}
\newcommand{\changes}[1]{#1}%{\color{red}#1}}
\newcommand{\changer}[1]{#1}%{\color{red}#1}}
\begin{document}

\maketitle

\begin{abstract} A {\em good drawing\/} of $K_n$ is a drawing of  the complete graph with $n$ vertices in the sphere such that:  no two edges with a common end  cross;  no two edges cross more than once; and no three edges all cross at the same point.  Gioan's Theorem asserts that any two good drawings of $K_n$ that have the same rotations of incident edges at every vertex are equivalent up to Reidemeister moves.   At the time of preparation, 10 years had passed between the statement in the WG 2005 conference proceedings and our interest in the proposition.  Shortly after we completed our preprint, Gioan  independently completed a preprint.  
\end{abstract}
\baselineskip =18pt

\section{Introduction}\label{sec:intro}

The main result of this work is the proof of the following result, presented by Gioan at the International Workshop on Graph-Theoretic Concepts in Computer Science 2005 (WG 2005) \cite{gioan}.

\begin{theorem}[Gioan's Theorem]\label{th:gioan}
Let $D_1$ and $D_2$ be good drawings (defined below) of $K_n$ in the sphere that have the same rotation schemes.  Then there is a sequence of Reidemeister moves (example below, defined in Section \ref{sec:gioan}) that transforms $D_1$ into $D_2$.
\end{theorem}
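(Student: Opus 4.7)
The plan is induction on $n$. For $n \le 4$, good drawings of $K_n$ have at most one crossing, and the theorem reduces to a finite case check: the planar drawing of $K_4$ and the one-crossing drawing have distinct rotation schemes, and any two drawings realizing the same rotation scheme are Reidemeister-equivalent in an essentially trivial way.

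For the inductive step, fix a vertex $v$ of $K_n$ and consider the subdrawings $D_i - v$ obtained by deleting $v$ and its $n-1$ incident edges. These are good drawings of $K_{n-1}$ whose rotation schemes agree, so by the inductive hypothesis there is a sequence of Reidemeister moves carrying $D_1 - v$ to $D_2 - v$. The first technical step is to \emph{lift} each such move to $D_1$: each move happens inside a small disk, across which at most $n-1$ edges of the star of $v$ pass, and each such edge can be handled by auxiliary moves so that the net combinatorial effect on $D_1 - v$ matches the original while $D_1$ itself stays a good drawing. After this lift we may assume $D_1 - v = D_2 - v$ as drawings in the sphere, so the only remaining discrepancy lies in the star of $v$.

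The central step is then a star-replacement lemma: if two good drawings of $K_n$ share the same $K_{n-1}$-subdrawing and the same rotation at every vertex, they are Reidemeister-equivalent. The rotation at $v$ prescribes the cyclic order in which the edges $vw_1,\dots,vw_{n-1}$ leave $v$, and the rotation at each $w_i$ fixes the slot into which the arc representing $vw_i$ must enter. A basic consequence of the good-drawing axioms is that the rotation scheme already determines which pairs of edges cross; thus the arcs representing $vw_i$ in $D_1$ and $D_2$ must cross the same edges of the fixed drawing of $K_{n-1}$, in the same cyclic order at $v$.

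The main obstacle, and the heart of the proof, is to show that two such arcs can be transformed into one another via Reidemeister moves. The natural plan is a bigon-elimination argument: if the $D_1$-version and the $D_2$-version of the arc $vw_i$ disagree, then their union bounds one or more disks in the sphere; an innermost such disk is a \emph{bigon} whose interior contains no vertex of $K_{n-1}$. One then argues that an innermost bigon can always be collapsed by a bounded sequence of triangle-type Reidemeister moves, each pushing one of its crossings across one of its sides, and that iterating this procedure eliminates all discrepancies one arc at a time. The subtlest point is guaranteeing that every intermediate drawing remains good (no tangency, triple point, or crossing of adjacent edges), which requires careful bookkeeping and most likely a secondary induction on the number of crossings lying inside the current bigon.
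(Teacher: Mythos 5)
Your overall architecture is the same as the paper's: induct on the number of vertices, reduce to the case where the two drawings agree on a copy of $K_{n-1}$, and then repair the star of the remaining vertex one arc at a time by collapsing innermost bigons with Reidemeister III moves. However, the sentence ``one then argues that an innermost bigon can always be collapsed by a bounded sequence of triangle-type Reidemeister moves'' is exactly where the entire difficulty of the theorem lives, and your proposal does not contain the idea needed to make it work. Collapsing a bigon forces you to perform a triangle move on a triangle bounded by the arc $vw_i$ and two edges $f,g$ of $K_{n-1}$ that cross $vw_i$ in opposite orders in the two drawings. Such a move is available only if the triangle is empty. The genuinely non-trivial fact --- Lemma \ref{lm:differentOrderReidTriang} of the paper, corresponding to Gioan's Lemma 3.2 --- is that such a ``Reidemeister triangle'' (three pairwise crossing edges, none of whose six endpoints lies in the triangle, with the cyclic order of the three crossings reversed between $D_1$ and $D_2$) contains \emph{no vertex of $K_n$ whatsoever}. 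The paper's proof of this is a page-long argument forcing a $6$-cycle to have nine crossings (the maximum possible in a good drawing) and then deriving two goodness contradictions. Without this lemma, a triangle you need to flip could contain a vertex together with its entire star, and no sequence of Reidemeister moves would ever empty it. By contrast, the worry you flag as the subtlest point --- that intermediate drawings stay good --- is minor: a triangle move performed on an empty triangle preserves goodness automatically.

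Two further, smaller gaps. First, even once the triangle contains no vertex, edges of $K_n$ outside the part already matched may still pass through it; the paper clears these out first by an auxiliary induction (each such edge cuts off a pre-Reidemeister triangle that can be pushed out across a side), and your proposal needs an analogous step. Second, your ``lifting'' step --- running the inductive move sequence on $D_1-v$ and lifting each move to $D_1$ --- runs into the same emptiness problem (the disk of a lifted move may contain $v$ or arcs of its star), and the paper avoids it entirely by always operating on the full drawing $D_1[K_n]$ and ordering the moves so that the already-matched subdrawing and already-matched star edges are never disturbed. You would do better to adopt that organization than to try to justify the lift directly.
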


We are only using ``Reidemeister III'' moves to shift a bit of the interior of an edge across another crossing (without crossing anything else).   Figure \ref{fg:vxInReidTriangle}  shows a typical example of ``before'' and ``after'' the move.

\begin{figure}[!ht]
\begin{center}
\includegraphics[scale=.4]{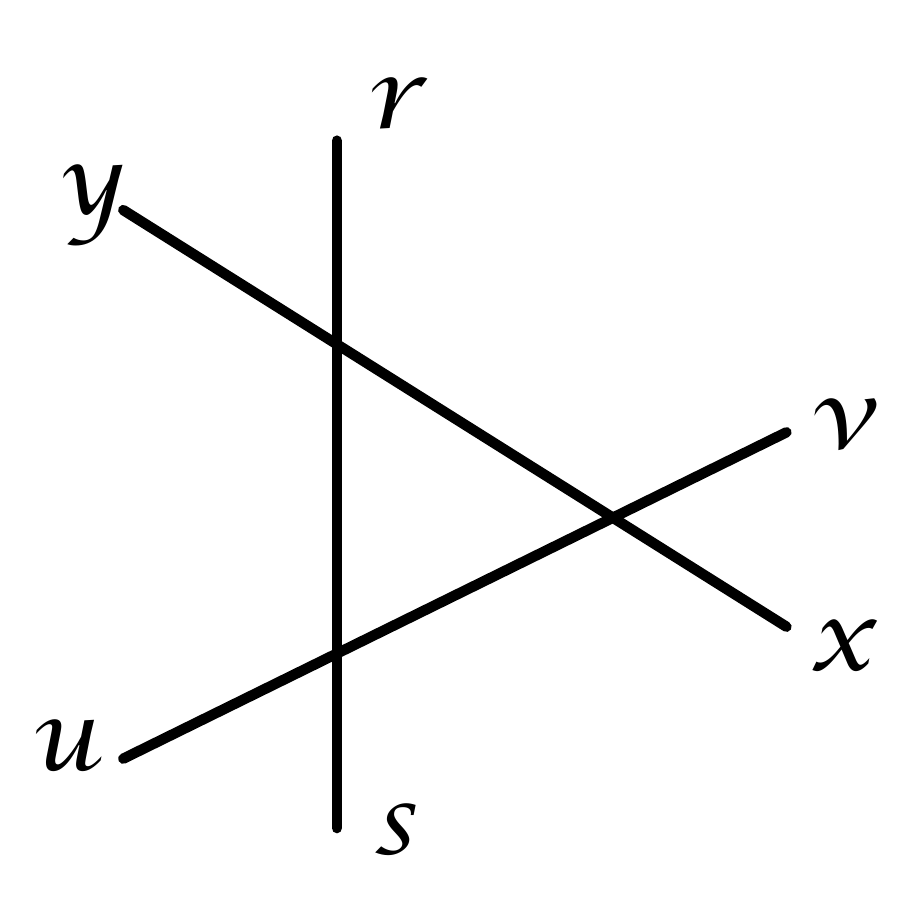}\hskip .5truein
\includegraphics[scale=.4]{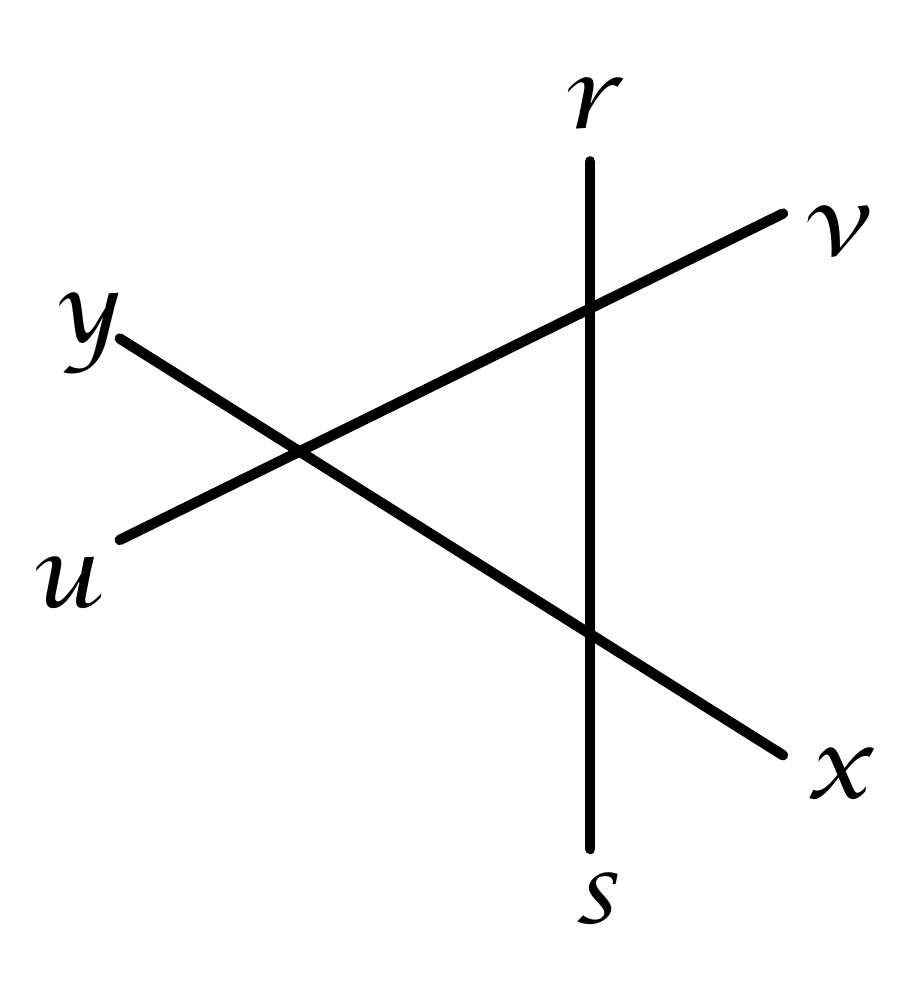}
\caption{A Reidemeister III move that transforms one drawing into another.}\label{fg:vxInReidTriangle}
\end{center}
\end{figure}

%This result arises in the context of the crossing number of the complete graph, in fact in the context of at least three different crossing numbers of the complete graph.

The Harary-Hill Conjecture asserts that the crossing number of the complete graph $K_n$ is equal to 
\[
H(n) := \frac 14\left\lfloor\frac{\mathstrut n}{\mathstrut 2}\right\rfloor
\left\lfloor\frac{\mathstrut n-1}{\mathstrut 2}\right\rfloor
\left\lfloor\frac{\mathstrut n-2}{\mathstrut 2}\right\rfloor
\left\lfloor\frac{\mathstrut n-3}{\mathstrut 2}\right\rfloor \,.
\]

Throughout this work, all drawings of graphs are {\em good drawings\/}: \begin{itemize}[leftmargin = .6truein]
\item no two edges incident with a common vertex cross; \item no two edges cross each other more than once; and \item no three edges cross at a common point.
\end{itemize}

Some of our interest in this problem derives from Dan Archdeacon's combinatorial generalization of this problem. Since his website may soon be lost and there is no other version that we know of, we reproduce it here.

\begin{changemargin}{1cm}{1cm} 
{\em Suppose the vertex set of $K_n$ is $I_n = \{1,...,n\}$. A local neighborhood of a vertex $k$ in a planar drawing determines a cyclic permutation of the edges incident with $k$ by considering the clockwise ordering in which they occur. Equivalently (looking at the edges' opposite endpoints), it determines a local rotation $\rho(k)$: a cyclic permutation of $I_n - k$. A (global) rotation is a collection of local rotations $\rho(k)$, one for each vertex $k$ in $I_n$.

It is well known that the rotations of $K_n$ are in a bijective correspondence with the embeddings of $K_n$ on oriented surfaces. The rotation arising from a planar drawing also determines which edges cross. Namely, edges $ab,cd$ cross in the drawing if and only if the induced local rotations on the vertices $\{a,b,c,d\}$ give a nonplanar embedding of that induced $K_4$.  {\em  [This is not quite true:  the rotation determines the crossing among the six edges in the $K_4$ induced by $a,b,c,d$, but it is not necessarily true that it is $ab$ with $cd$. AMRS]}

The stated conjecture on the crossing number of $K_n$ asserts that the minimum number (over all planar drawings) of induced nonplanar $K_4$\!'s satisfies the given lower bound. We generalize this to all rotations.

\bigskip
{\bf Conjecture:} In any rotation of $K_n$, the number of induced nonplanar $K_4$\!'s is at least $(1/4)\hskip 1pt [n/2]\hskip 1pt [(n-1)/2]\hskip 1pt [(n-2)/2]\hskip 1pt [(n-3)/2]$ where $[m]$ is the integer part of $m$.

\bigskip 
Not every rotation corresponds to a drawing (see the related problem ``Drawing rotations in the plane''), so this conjecture is strictly stronger than the one on the crossing number of $K_n$. However, this conjecture has the advantage of reducing a geometric problem to a purely combinatorial one.

The problem arose from my attempts to prove the lower bound on the crossing number. It is supported by computer calculations. Namely, I wrote a program which started with a rotation of $K_n$ and using a local optimization technique (hill-climbing), randomly swapped edges in a local rotation whenever that swap did not increase the number of induced nonplanar $K_4$\!'s. The resulting locally minimal rotations tended to resemble the patterns apparent in an optimal drawing of $K_n$. For small $n$ this minimum was the conjectured upper bound. For larger $n$ it was usually slightly larger.}

\end{changemargin}

It is well-known that the rectilinear crossing number (all edges are required to be straight-line segments) of $K_n$ is, for $n\ge 10$, strictly larger than $H(n)$ \cite{rectilinearVsUsual}.  In fact, this applies to the more general {\em pseudolinear\/} crossing number \cite{pseudolinearVsUsual}.  

An {\em arrangement of pseudolines\/} $\Sigma$ is a finite set of simple open arcs in the plane $\rtwo$ such that:  for each $\sigma\in\Sigma$, $\rtwo\setminus \sigma$ is not connected; and for distinct $\sigma$ and $\sigma'$ in $\Sigma$, $\sigma\cap \sigma'$ consists of a single point, which is a crossing.   

A drawing of $K_n$ is {\em pseudolinear\/} if there is an arrangement $\Sigma$ of $\binom n2$ pseudolines such that the edges of $K_n$ are all  contained in different pseudolines of $\Sigma$.    It is clear that a rectilinear drawing (chosen so no two lines are parallel) is pseudolinear.

The arguments (originally due to Lov\'asz et al \cite{lovasz} and, independently, \'Abrego and Fern\'andez-Merchant \cite{af-m}) that show every rectilinear drawing of $K_n$ has at least $H(n)$ crossings apply equally well to pseudolinear drawings.

The proof that every optimal pseudolinear drawing of $K_n$ has its outer face bounded by a triangle \cite{optTriang} uses the ``allowable sequence'' characterization of pseudoline arrangements of Goodman and Pollack \cite{goodPoll}.  Our principal result in \cite{amrs} is that there is another, topological, characterization of pseudolinear drawings of $K_n$.

A drawing $D$ of $K_n$ is {\em face-convex\/} if there is an open face $F$ of $D$ such that, for every 3-cycle $T$ of $K_n$, if $\Delta$ is the closed face of $D[T]$ disjoint from $F$, then, for any two vertices $u,v$ such that $D[u],D[v]$ are both in $\Delta$, the arc $D[uv]$ is also contained in $\Delta$. 

The main result in \cite{amrs} is that every face-convex drawing of $K_n$ is pseudolinear \changer{and conversely}.   An independent proof has been found by Aichholzer et al \cite{ahpsv}; their proof uses Knuth's  CC systems \cite{knuth}, which are an axiomatization of sets of pseudolines.    Moreover, their statement is in terms of a forbidden configuration.  Properly speaking, their result is of the form, ``there exists a face relative to which the forbidden configuration does not occur''.  Their face and our face are the same.  However, our proof is completely different, yielding directly a polynomial time algorithm for finding the pseudolines.

Aichholzer et al show that there is a pseudolinear drawing of $K_n$ having the same crossing pairs of edges as the given drawing of $K_n$.   Gioan's Theorem \cite{gioan} (Theorem \ref{th:gioan} above) is then invoked to show that the original drawing is also pseudolinear.     

The proof in \cite{amrs} is completely self-contained; in particular, it does not invoke Gioan's Theorem.  An earlier version anticipated an application of Gioan's Theorem similar to that in \cite{ahpsv}; hence our interest in having a proof.

A principal ingredient in our argument is a consideration of the facial structure of an arrangement of arcs in the plane.   An {\em arrangement of arcs\/} is a finite set $\Sigma$ of open arcs in the plane such that, for every $\sigma\in\Sigma$, $\rtwo\setminus \sigma$ is not connected and any two elements of $\Sigma$ have at most one point in common, which must be a crossing. 

Let $\Sigma$ be an arrangement of arcs.  Since $\Sigma$ is finite, there are only finitely many faces of $\Sigma$:  these are the components of $\rtwo\setminus (\bigcup_{\sigma\in \Sigma}\sigma)$.   As it comes up often, we let $\mathcal P(\Sigma)$ be the pointset $\bigcup_{\sigma\in\Sigma}\sigma$. 

The {\em dual\/} $\Sigma^*$ of $\Sigma$ is the finite graph whose vertices are the faces of $\Sigma$ and there is one edge for each segment $\alpha$ of each $\sigma\in \Sigma$ such that $\alpha$ is one of the components of $\sigma\setminus \mathcal P(\Sigma\setminus\{\sigma\})$.  The dual edge corresponding to $\alpha$ joins the faces of $\Sigma$ on either side of $\alpha$.

Although we do not need it here, the following lemma motivates one that we need in our proof of Gioan's Theorem.  Its simple proof from \cite{amrs} is included here for completeness.

\begin{lemma}[Existence of dual paths]\label{lm:dualPaths}  Let $\Sigma$ be an arrangement of arcs in the plane and let $a,b$ be points of the plane not in any line in $\Sigma$.  Then there is an $ab$-path in $\Sigma^*$ crossing each arc in $\Sigma$ at most once.
\end{lemma}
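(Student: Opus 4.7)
The plan is to realize the desired $ab$-path in $\Sigma^*$ as a curve in $\rtwo$ minimizing crossings with $\mathcal{P}(\Sigma)$.  We consider any generic curve $\gamma\colon [0,1]\to\rtwo$ from $a$ to $b$---one that avoids the pairwise crossings of $\Sigma$ and meets each arc transversely in finitely many points---and note that $\gamma$ traces an $ab$-walk in $\Sigma^*$ whose edges are the arc-segments crossed by $\gamma$, counted with multiplicity.  Choose $\gamma$ to minimize the total number of its crossings with $\mathcal{P}(\Sigma)$.  We claim $\gamma$ crosses each arc at most once; this suffices, since the induced walk is then a trail in $\Sigma^*$ and so contains an $ab$-path with the required property.

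Suppose the claim fails, and $\gamma$ crosses some $\sigma\in\Sigma$ at two points $p_1,p_2$ that are consecutive along $\gamma$.  Then the subcurve $\gamma_{p_1p_2}$ of $\gamma$ lies entirely on one side of $\sigma$; let $\sigma_{p_1p_2}$ be the subarc of $\sigma$ with endpoints $p_1,p_2$, and let $\beta$ be a small push-off of $\sigma_{p_1p_2}$ into the \emph{other} side of $\sigma$, so that $\beta$ does not meet $\sigma$.  Because $p_1,p_2$ are consecutive along $\gamma$, the portions of $\gamma$ just before $p_1$ and just after $p_2$ lie on the same side as $\beta$, so the curve $\gamma'$ obtained from $\gamma$ by substituting $\beta$ for $\gamma_{p_1p_2}$ is a genuine generic curve from $a$ to $b$, with two fewer $\sigma$-crossings than $\gamma$.

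The crucial step is to verify that the substitution does not net-increase crossings with any other arc $\tau\in\Sigma$.  The concatenation $\gamma_{p_1p_2}\cup\beta$ is a closed curve in $\rtwo$, and because $\tau$ separates $\rtwo$, every closed curve crosses $\tau$ an even number of times.  But $\beta$---being a slight push-off of $\sigma_{p_1p_2}$---crosses $\tau$ at most once, and precisely when $\tau$ meets $\sigma_{p_1p_2}$.  Parity then forces that whenever $\beta$ crosses $\tau$, the subcurve $\gamma_{p_1p_2}$ must also cross $\tau$ at least once; hence the change in $\tau$-crossings from $\gamma$ to $\gamma'$ is non-positive.  Summing over all arcs, $\gamma'$ has strictly fewer total crossings than $\gamma$, contradicting minimality.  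The principal obstacle is precisely this parity argument---recognizing that a push-off onto the ``wrong'' side of $\sigma$ cannot net-increase crossings with any arc separating $\rtwo$---while the remaining ingredients (genericity of $\gamma$ and reduction of a trail to a path) are routine.
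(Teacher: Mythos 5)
Your proof is correct, but it takes a genuinely different route from the paper's. The paper argues by induction on the number of arcs of $\Sigma$ separating $a$ from $b$: it picks a separating arc $\sigma$ incident with the face $F_a$, crosses it once via a single dual edge, restricts the arrangement to the side of $\sigma$ containing $b$, and recurses. You instead run a global extremal argument: take a generic $ab$-curve minimizing its total number of crossings with $\mathcal P(\Sigma)$, and show that two consecutive crossings with the same arc $\sigma$ can be eliminated by rerouting along a push-off of the intercepted subarc of $\sigma$, the key point being the parity observation that, because each $\tau\in\Sigma$ separates the plane and meets $\sigma$ at most once, the closed curve formed by the old and new subcurves forces the old subcurve to cross $\tau$ at least as often as the push-off does. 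Both arguments are sound at the level of rigor of the paper. The paper's induction is shorter but quietly requires that the restriction $\Sigma'$ of the arrangement to one side of $\sigma$ is again an arrangement whose dual paths lift to dual paths of $\Sigma^*$ crossing each original arc at most once; your version avoids any restriction of the arrangement, is self-contained, and its rerouting step is close in spirit to the sliding/Reidemeister manipulations the paper uses later (Lemma \ref{lm:reidemeister}), at the cost of invoking the even-intersection property of separating arcs and some care that the push-off is taken small enough to pick up only the crossings of $\tau$ with the intercepted subarc.
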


\begin{cproof}
We proceed by induction on the number of curves in $\Sigma$ that separate $a$ from $b$, the result being trivial if there are none.  Otherwise, for $x\in \{a,b\}$,  let $F_x$ be the face of $\Sigma$ containing $x$ and let $\sigma\in\Sigma$ be incident with $F_a$ and separating $a$ from $b$.  Then $\Sigma^*$ has an edge $F_aF$ that crosses $\sigma$.  

Let $R$ be the region of $\rtwo\setminus \sigma$ that contains $F_b$ and let $\Sigma'$ be the set $\{\sigma'\cap R\mid \sigma'\in \Sigma,\ \sigma'\cap R\ne\varnothing\}$.  The induction implies there is an $FF_b$-path in $\Sigma'{}^*$.  Together with $F_aF$, we have an $F_aF_b$-path in $\Sigma^*$, as required.
\end{cproof}

\section{Proof of Gioan's Theorem}\label{sec:gioan}

In this section, we give a simple, self-contained proof Gioan's Theorem \cite{gioan}.  When we completed the proof in August 2015, we corresponded with Gioan, who was independently preparing his own version.  Each version has had some impact on the other.  We do not include any of the first order logical considerations that occur in Gioan's version.

For convenience, we restate our main result here.  The definition of a Reidemeister move is given just after this statement.

\bigskip\noindent{\bf Theorem \ref{th:gioan}} {\em 
Let $D_1$ and $D_2$ be drawings of $K_n$ in the sphere that have the same rotation schemes.  Then there is a sequence of Reidemeister moves that transforms $D_1$ into $D_2$.}

\bigskip

In order to define Reidemeister move and prove our first intermediate lemmas, we require a small new consideration.  Let $\Sigma$ be an arrangement of arcs in the plane.   A {\em vertex of $\Sigma$\/} is a point that is the intersection of two or more arcs in $\Sigma$.   

At a vertex $v$, the rotation of the arcs containing $v$ is of the form $\sigma_1,\sigma_2,\dots,\sigma_k,\sigma_1,\sigma_2,$ $\dots,\sigma_k$; each arc occurs twice here, once for each of the ``rays'' it contains that start at $v$.   Let $(F_0,F_1,\dots,F_{k-1},F_k,F_{k+1},\dots,F_{2k-1})$ the cyclic sequence of faces around $v$.  

Suppose $P$ is a dual path containing the subpath $(F_0,F_1,\dots,F_k)$ such that $P$ crosses each arc in $\Sigma$ at most once.   The path obtained from $P$ by {\em sliding over the vertex $v$\/} is the path $P$, except $(F_0,F_1,\dots,F_k)$ is replaced by the dual path (of the same length) $(F_0,F_{2k-1},F_{2k-2},\dots,F_{k+1},F_k)$.  (None of $F_{2k-1},F_{2k-2},\dots,F_{k+1}$ can occur in $P$, as $P$ crosses each arc of $\Sigma$ at most once.  Thus, the result of the sliding is indeed a new dual path.)

We remark that we may interpret the change as either rerouting $P$ across $v$ or moving $v$ across $P$ and adjusting the edges incident with $v$.

A {\em Reidemeister move\/} is a sliding over a vertex $v$ that is in precisely two arcs in $\Sigma$.  The following may be viewed as a supplement to Lemma \ref{lm:dualPaths}.

\begin{lemma}\label{lm:reidemeister}
Let $\Sigma$ be an arrangement of arcs in the plane and let $a$ and $b$ be any two points in the plane not in $\mathcal P(\Sigma)$.  Let $F_a$ and $F_b$ be the faces of $\Sigma$ containing $a$ and $b$, respectively.  Then, any distinct $F_aF_b$-paths $P$ and $Q$ in $\Sigma^*$, each crossing every arc in $\Sigma$ at most once, are equivalent up to sliding over vertices.  Moreover, there is a sequence of slidings such that every sliding involves moving a vertex across $P$ from inside to outside, always relative to the closed disc bounded by $P\cup Q$.
\end{lemma}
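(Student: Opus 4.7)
The plan is to realize both $P$ and $Q$ as simple topological curves in the plane and then induct on the number of vertices of $\Sigma$ trapped between them. Run each dual path through a chosen interior point of every face it visits, meeting each dual edge transversally at a single point of the corresponding arc--segment; by a small general--position perturbation we may assume $P$ and $Q$ meet only at face--points that belong to both paths. Splitting at each such shared face reduces the claim to the case in which $P$ and $Q$ share only the endpoints $F_a$ and $F_b$, so that $P\cup Q$ is a simple closed curve bounding (by Jordan--Sch\"onflies) a closed disc $\Delta$ in the sphere. Let $N$ be the number of vertices of $\Sigma$ in the interior of $\Delta$; I induct on $N$, arranging that every sliding moves a vertex across $P$ from inside the current enclosing disc to outside it, which yields the moreover clause automatically.

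For the base case $N=0$, each arc $\sigma\in\Sigma$ meeting the interior of $\Delta$ intersects it in a single subarc running from its unique crossing with $P$ to its unique crossing with $Q$: a second crossing of $\sigma$ with either $P$ or $Q$ would violate the hypothesis, and the absence of vertices precludes $\sigma$ from branching inside $\Delta$. These pairwise disjoint subarcs partition $\Delta$ into strips, each contained in a single face of $\Sigma$, so $P$ and $Q$ cross the same arcs in the same order and pass through the same face between consecutive crossings; therefore $P=Q$.

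For the inductive step with $N\geq 1$, I would choose a vertex $v$ in the interior of $\Delta$ that is ``innermost near $P$'' in the sense that some face $F$ traversed by $P$ has a corner at $v$ and the closed sector of $v$ lying between $P$ and $v$ itself encloses no other vertex of $\Sigma$. Such a $v$ exists whenever $N\geq 1$, for if no face on $P$ had a vertex of $\Sigma$ on its $\Delta$--side boundary, then every face on $P$ would be a vertex--free strip from $P$ to $Q$, forcing $P=Q$ by the base--case argument; any candidate lacking the innermost property can be replaced by a strictly nearer one, so the process terminates. With $v$ fixed, one obtains a subpath $(F_0,F_1,\dots,F_k)$ of $P$ going halfway around $v$ along the $\Delta$--side (with $k$ the number of arcs of $\Sigma$ through $v$), and sliding $P$ over $v$ replaces it by $(F_0,F_{2k-1},\dots,F_{k+1},F_k)$. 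The innermost choice ensures that the replacement faces lie on the $Q$--side of $v$ and do not appear elsewhere in $P$, so the new walk $P'$ is a proper dual path, still crosses every arc of $\Sigma$ at most once, and together with $Q$ bounds a disc $\Delta'$ containing precisely $N-1$ vertices of $\Sigma$; the induction hypothesis closes the argument.

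The main obstacle will be making the ``innermost'' choice of $v$ fully rigorous and verifying that the prescribed sliding is legal---in particular, that the $k$ arc--segments at $v$ on the $P$--side are indeed crossed consecutively by $P$, and that the $k-1$ replacement faces cannot coincide with any face already appearing in $P$ (which would turn $P'$ into a non--simple walk). Both properties should follow from the emptiness of the inner sector, which forces the relevant arc--segments emanating from $v$ to meet $P$ directly without obstruction, but the topological bookkeeping---especially when $v$ is multivalent---will require the most care, and it is also the point at which the inside--to--outside restriction in the moreover clause must be re--verified at each step.
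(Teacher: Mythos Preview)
Your overall architecture---reduce to internally disjoint subpaths $P_1,Q_1$, bound a disc $\Delta$, induct on the number of vertices of $\Sigma$ inside, and at each step find a vertex that can be slid across $P_1$ out of $\Delta$---is exactly the paper's, and your base case $N=0$ is handled the same way.

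The place where your proposal is genuinely incomplete is precisely the step you flag: producing a vertex $v$ over which the slide is legal. Your ``innermost near $P$'' criterion and the accompanying descent (``any candidate lacking the innermost property can be replaced by a strictly nearer one'') are not made precise: no well-founded ordering is defined, and even once you locate a vertex $v$ on the $\Delta$-side boundary of some face of $P$, you still must show that \emph{every} arc through $v$ has $v$ as its first vertex when traversed from its $P$-end, so that the $k$ crossings with $P$ are consecutive and the $k-1$ replacement faces are new. One empty sector at $v$ does not suffice when $v$ lies on three or more arcs.

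The paper supplies exactly this missing descent via a concrete potential. Pick any arc $\sigma$ with a vertex in $\Delta$ and let $v$ be its first vertex from the $P$-end. If every arc through $v$ also has $v$ as its first vertex from $P$, then $v$ is slidable. Otherwise there are two arcs $\sigma,\bar\sigma$ through $v$, consecutive in the rotation at $v$, such that $v$ is first on $\sigma$ but not on $\bar\sigma$. Set $b(v)$ to be the number of arcs of $\Sigma$ crossing $P$ strictly between $\sigma\cap P$ and $\bar\sigma\cap P$. The triangular region bounded by $P$, the clean portion of $\sigma$, and $\bar\sigma$ forces every arc through the first vertex $\bar v$ on $\bar\sigma$ (from its $P$-end) to meet $P$ inside that same interval, whence $b(\bar v)<b(v)$. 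Iterating gives a vertex $w$ with $b(w)=0$, and such a $w$ is slidable. That is the rigorous form of your ``strictly nearer'' step; with it in place, the remainder of your argument goes through unchanged.
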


\begin{cproof}
Let $P_1$ and $Q_1$ be subpaths of $P$ and $Q$ having common end points but otherwise disjoint.  Then (any natural image in the plane of) $P_1\cup Q_1$ bounds a disc $\Delta$ and each arc in $\Sigma$ that crosses one of $P_1$ and $Q_1$ crosses the other.  We will show that there is a vertex in $\Delta$ over which we can slide $P_1$.

Since $P_1$ and $Q_1$ are distinct dual paths, there is a vertex of $\Sigma$ in $\Delta$.  Let $\sigma\in \Sigma$ have an arc across $\Delta$ and contain a vertex of $\Sigma$; let $v$ be the first vertex of $\Sigma$  encountered as we traverse $\sigma$ across $\Delta$ from its $P_1$-end.      Among all the $\sigma\in \Sigma$ that contain $v$, either all have $v$ as their first encountered vertex or there are two, $\sigma$ and $\bar \sigma$, consecutive in the rotation at $v$,  such that $v$ is the first encountered vertex for $\sigma$, but not for $\bar\sigma$.  In the former case, we can slide $v$ across $P_1$.

Suppose $\sigma'\in\Sigma$ has a crossing with $\bar\sigma$ between $v$ and the intersection of $\bar\sigma$ with $P_1$.  Let $\Delta'$ be the disc bounded by $P_1$, $\sigma$, and $\bar\sigma$.  Then $\sigma'\cap \Delta'$ intersects the boundary of $\Delta'$ at least twice, but not on \changer{$\sigma\cap \Delta'$}.  Thus, $\sigma'$ crosses $P_1$ between $\sigma\cap P_1$ and $\bar\sigma\cap P_1$.  

Let $\bar v$ be the first vertex of $\Sigma$ encountered as we traverse $\bar\sigma$ from $\bar\sigma\cap P_1$.  Then every other arc in $\Sigma$ that contains $\bar v$ intersects $P_1$ between $\sigma\cap P_1$ and $\bar\sigma\cap P_1$.

Letting $b(v)$ denote the number of arcs in $\Sigma$ that cross $P_1$ between $\sigma\cap P_1$ and $\bar\sigma\cap P_1$, we see that $b(\bar v)<b(v)$.  Therefore, there is always a vertex $w$ of $\Sigma$ such that $b(w)=0$ and we can slide $w$ across $P_1$.

After sliding $w$ across $P_1$,  the disc bounded by $P_1$ and $Q_1$ has fewer vertices of $\Sigma$.  An easy induction completes the proof.
\end{cproof}

Gioan's Theorem considers two drawings $D_1 $ and $D_2$ of $K_n$ in the sphere that have the same rotation scheme.  Let $t,u,v,w$ be four distinct vertices of $K_n$.  Let $T$ be the 3-cycle induced by $t,u,v$.  Then $D_1[T]$ is a simple closed curve in the sphere.  The rotations at $t$, $u$, and $v$ determine where bits of the edges $D_1[tw]$, $D_1[uw]$, and $D_1[vw]$ go from their ends $t$, $u$, and $v$, respectively, relative to $D_1[T]$.  The side of $D_1[T]$ that has the majority (two or three) of these bits of edges is where $D_1[w]$ is.  If $tw$ is the minority edge, then $D_1[tw]$ crosses $D_1[uv]$; conversely, a crossing $K_4$ produces, for each of its 3-cycles, a minority edge.   This simple observation immediately yields the following fundamental fact.

\begin{enumerate}[label=(F\arabic*)]
\item \label{it:rotationK4}
Let $D_1$ and $D_2$ be two drawings of $K_n$ with the same rotation scheme.  If $J$ is any $K_4$ in $K_n$, then there is an orientation-preserving homeomorphism of the sphere to itself mapping $D_1[J]$ onto $D_2[J]$ that preserves the vertex-labels of $J$.
\end{enumerate}
 
There are some elementary corollaries of \ref{it:rotationK4}:
\begin{enumerate}[resume,label=(F\arabic*)]
\item\label{it:rotationCrossing}  the pairs of crossing edges are determined by the rotation scheme; 
\item\label{it:rotationDirCrossing} if the edges of $K_n$ are oriented, then the directed crossings are determined by the rotation scheme; and
\item\label{it:triangleContainment}  if $u,v,w,x$ are distinct vertices of $K_n$, then the side  of the 3-cycle (relative to any of its oriented sides) induced by $u,v,w$ that contains $x$ is determined by the rotation scheme.
\end{enumerate}
By \ref{it:rotationDirCrossing}, we mean that, if $e$ and $f$ cross, then, as we follow the orientation of $e$, the crossing of $e$ by the traversal $f$ is either left-to-right in all drawings or right-to-left in all drawings, depending only on the rotation scheme. 

These facts can hardly be new.  In fact, variations of some of them appear in Kyn\v cl \cite{kyncl}.

\begin{lemma}\label{lm:crossingOrderDetermines}  Let $D_1$ and $D_2$ be two drawings of $K_n$ in the sphere with the same rotation scheme.  Let $G$ be a subgraph of $K_n$ and suppose that, for each edge $e$ of $G$, as we traverse $e$ from one end to the other, the edges of $G$ that cross $e$ occur in the same order in both $D_1$ and $D_2$.  Then there is an orientation-preserving homeomorphism of the sphere mapping $D_1[G]$ onto $D_2[G]$ that preserves all vertex- and edge-labels.
\end{lemma}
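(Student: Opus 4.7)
The plan is to reduce the statement to the classical fact that two embeddings of the same graph in the sphere with the same rotation system are equivalent under an orientation-preserving homeomorphism. For each $i \in \{1,2\}$, let $G_i^{\times}$ be the \emph{planarization} of $D_i[G]$: the graph obtained from $G$ by inserting, for each crossing of $D_i[G]$, a new degree-$4$ vertex that subdivides the two edges of $G$ meeting at that crossing. Then $D_i[G]$ is a crossing-free embedding of $G_i^{\times}$ in the sphere, and the original edges of $G$ are recovered as the unique walks in $G_i^{\times}$ whose internal vertices are all subdivision vertices.

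First I would identify $G_1^{\times}$ with $G_2^{\times}$ as abstract labelled graphs. By fact \ref{it:rotationCrossing}, the unordered pair of edges of $K_n$ meeting at any crossing is determined by the rotation scheme; restricting to $G$, this produces a canonical bijection between the crossing vertices of $G_1^{\times}$ and those of $G_2^{\times}$ (match the crossing of $e$ with $f$ in $D_1[G]$ to the crossing of $e$ with $f$ in $D_2[G]$). The hypothesis of the lemma asserts that along every edge $e$ of $G$ the crossings occur in the same cyclic order in both drawings, so the subdivision arcs along $e$ match up in the same order. This yields a label-preserving isomorphism $\phi: G_1^{\times} \to G_2^{\times}$ that is the identity on vertices and arcs of $G$.

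Next I would check that $\phi$ carries the rotation system of $G_1^{\times}$ (read off $D_1[G]$) to that of $G_2^{\times}$ (read off $D_2[G]$). At an original vertex $v$ of $G$, the rotation on $G_i^{\times}$ agrees with the given rotation of $G$ at $v$, and this coincides in the two drawings by hypothesis. At a crossing vertex $x$ corresponding to edges $e,f$ of $G$, the cyclic order of the four incident arc-stubs is exactly the left/right datum of the crossing of $e$ and $f$, which by fact \ref{it:rotationDirCrossing} is the same in both drawings. Hence the two rotation systems on the common graph $G_1^{\times} \cong G_2^{\times}$ coincide.

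Finally I would invoke the classical theorem that any two embeddings of a connected graph in $\mathbb{S}^2$ inducing the same rotation system are related by an orientation-preserving self-homeomorphism of the sphere that is the identity on vertex and edge labels (this is equivalent to the uniqueness of the $2$-cell embedding determined by a rotation system on the sphere). Applying this componentwise to $G_1^{\times}$ and $G_2^{\times}$ produces the desired homeomorphism, which preserves labels on arcs of $G_i^{\times}$ and hence on edges and vertices of $G$. The main obstacle will be the case where $G$ (and thus $G_i^{\times}$) is disconnected: there one must match, for each component $C$, the face of $C$ in $D_i[G]$ that contains any other component $C'$. This data is again determined by the rotation scheme via fact \ref{it:triangleContainment} applied to a triangle of $K_n$ with two vertices in $C$ and one in $C'$, so one can glue the componentwise homeomorphisms into a single self-homeomorphism of the sphere taking $D_1[G]$ onto $D_2[G]$.
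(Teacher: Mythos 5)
Your proposal is correct and follows essentially the same route as the paper: planarize each drawing by inserting degree-$4$ vertices at crossings, observe that the rotations at both original and crossing vertices coincide (using \ref{it:rotationDirCrossing} and the order hypothesis), and invoke the uniqueness of spherical embeddings of a connected graph with a given rotation system. Your additional care with the disconnected case (gluing components via \ref{it:triangleContainment}) is a detail the paper's one-paragraph proof does not spell out, but it is not a different method.
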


\begin{cproof}
This is a consequence of the well known fact that a rotation scheme of a \changer{connected} graph determines a unique (up to surface orientation-preserving homeomorphisms) cellular embedding of a graph in an orientable surface; see \cite[Thm.~3.2.4]{mt}. We construct a planar map from each of $D_1[G]$ and $D_2[G]$ by inserting a vertex of degree 4 at each crossing point.  By \ref{it:rotationDirCrossing} and the hypothesis, respectively, the oriented crossings and the orders of the crossings of each edge are the same in both $D_1$ and $D_2$.  Thus, the rotations at these degree 4 vertices are also the same.  Therefore, the planar maps $D_1[G]$ and $D_2[G]$ are the same, as claimed.
\end{cproof}

Lemma \ref{lm:crossingOrderDetermines} asserts that the orders of crossings determine the drawing.  Thus, we need to consider the situation that some edge has two edges crossing it in different orders in the two drawings.  

Let $e$, $f$, and $g$ be three distinct edges in a drawing $D$ of $K_n$, no two having a common end.  Suppose each two of $e$, $f$, and $g$ have a crossing, labelled $\times_{e,f}$, $\times_{e,g}$, and $\times_{f,g}$.  The union of the segments of each of $e$, $f$, and $g$ between their two crossings is a simple closed curve.  If one of the two sides of this simple closed curve does not have an end of any of $e$, $f$, and $g$, then this closed disc is the  {\em pre-Reidemeister triangle constituted by $e$, $f$, and $g$.}  

Let $D_1$ and $D_2$ be drawings of $K_n$ in the sphere with the same rotation scheme.  A {\em Reidemeister triangle\/} for $D_1$ and $D_2$ is a pre-Reidemeister triangle $T$ for both $D_1$ and $D_2$ constituted by the edges $e$, $f$, and $g$ but with the clockwise traversal of the three segments between pairs of crossings giving the opposite cyclic ordering of the three crossings. 

Let $J$ be a $K_4$ in $D_1$ with a crossing.  Then \ref{it:rotationCrossing} shows that $D_2[J]$ also has a crossing, with the same pair of edges crossing.  For $\alpha\in\{1,2\}$, let $\times^\alpha$ denote the crossing in $D_\alpha[J]$.  Then $D_\alpha[J]$ has five faces:  one {\em 4-face\/} bounded by a 4-cycle in $J$; and four {\em 3-faces\/}, each incident with $\times^\alpha$.  

\bigskip\noindent{\bf Notation}  If $x$ and $r$ are the two vertices of $J$ incident with a 3-face that with crossing edges $e$ and $f$, then \changer{we use $T^\alpha_{x,r}$ to denote this 3-face and $xr\times^\alpha_{e,f}$ to denote its boundary}.  

\bigskip

Our next lemma corresponds to Lemma 3.2 of \cite{gioan}.  This result is a central, non-trivial point in the argument.

\begin{lemma}\label{lm:differentOrderReidTriang}
Let $D_1$ and $D_2$ be two drawings of $K_n$ in the sphere with the same rotation scheme.   Then, for any Reidemeister triangle $R$ for $D_1$ and $D_2$, $D_1[R]$ contains a vertex of $D_1[K_n]$.
\end{lemma}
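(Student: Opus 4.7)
The plan is to argue by contradiction.  Suppose $D_1[R]$ contains no vertex of $K_n$; the goal is to derive that the clockwise cyclic orders of $\times_{e,f},\times_{e,g},\times_{f,g}$ around the central cycle of $R$ coincide in $D_1$ and $D_2$, contradicting the Reidemeister triangle hypothesis.  As a first structural consequence, write $\partial D_1[R]=\alpha_e\cup\alpha_f\cup\alpha_g$ where $\alpha_x\subseteq x$ is the sub-arc of $x\in\{e,f,g\}$ between its two crossings with the other two edges.  Any edge $h\in E(K_n)\setminus\{e,f,g\}$ whose image meets $\operatorname{int}(D_1[R])$ must enter and leave through $\partial D_1[R]$, and since the good-drawing condition forbids $h$ from crossing any of $e,f,g$ more than once, $h$ meets $\partial D_1[R]$ exactly $0$ or $2$ times; in the second case $h$ crosses precisely two of $e,f,g$ on their boundary sub-arcs.

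My intended approach is induction on the number $N$ of edges of $K_n\setminus\{e,f,g\}$ meeting $\operatorname{int}(D_1[R])$.  For the base case $N=0$, the disc $D_1[R]$ contains no pieces of $K_n$ apart from parts of its boundary.  The cyclic order of $\times_{e,f},\times_{e,g},\times_{f,g}$ around the central cycle is then visibly determined by the local arrangement of the four half-arcs at each of the three corners, and by \ref{it:rotationDirCrossing} this local arrangement is preserved between $D_1$ and $D_2$.  Hence the cyclic orders in the two drawings agree, contradicting the Reidemeister triangle condition.

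For the inductive step, take an $h\in E(K_n)\setminus\{e,f,g\}$ meeting $\operatorname{int}(D_1[R])$; say $h$ crosses $e$ and $f$ on their boundary sub-arcs.  Then $h,e,f$ pairwise cross, and together with the crossings $\times_{e,f},\times_{e,h},\times_{f,h}$ they bound a disc $R'$ with $\operatorname{int}(R')\subset\operatorname{int}(D_1[R])$ through which strictly fewer edges of $K_n$ pass (since at least $h$ is removed from the count and $R'$ is a proper sub-disc of $D_1[R]$).  The key non-trivial step is to verify that $R'$ is itself a Reidemeister triangle for $D_1$ and $D_2$:  one uses facts \ref{it:rotationK4}--\ref{it:triangleContainment} both to pin down the positions of the endpoints of $e,f,h$ relative to the central cycle of $\{e,f,h\}$ in $D_2$ (so that $R'$ is a pre-Reidemeister triangle there as well) and to show that the reversal of cyclic order around $R$ forces a reversal of cyclic order around $R'$.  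Once this is done, the inductive hypothesis applied to $R'$ places a vertex of $K_n$ in $D_1[R']\subseteq D_1[R]$, contradicting the standing assumption.

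The chief obstacle I foresee is precisely the last verification that $R'$ inherits the Reidemeister-triangle property from $R$; this requires a careful local analysis at the two new corners $\times_{e,h},\times_{f,h}$ of $R'$ together with a global argument that the endpoints of $h$ lie on the appropriate side of the central cycle of $\{e,f,h\}$ in $D_2$.  Facts \ref{it:rotationK4}--\ref{it:triangleContainment} should do the work, but the bookkeeping will be delicate.
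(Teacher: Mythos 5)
Your argument is aimed at the literal wording of the statement, but that wording appears to carry a sign error: the paper's own proof opens by supposing, \emph{for contradiction}, that a vertex $a$ lies in the endpoint-free face of $D_1[R]$, and the lemma is later invoked in the main proof as ``no vertex of $K_n$ is inside $R_s$.'' The true (and needed) assertion is that the endpoint-free side of a Reidemeister triangle contains \emph{no} vertex of $K_n$; you have set out to prove its negation, and your proof cannot succeed. The decisive failure is the base case. For an empty triangle ($N=0$) the cyclic order of $\times_{e,f},\times_{e,g},\times_{f,g}$ around the central cycle is \emph{not} determined by the local arrangements at the three corners: a Reidemeister III move preserves every rotation and every directed crossing --- so all of \ref{it:rotationK4}--\ref{it:triangleContainment} hold between the two drawings --- yet reverses exactly that cyclic order. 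That is the entire content of Figure \ref{fg:vxInReidTriangle} and the reason the theorem needs Reidemeister moves at all. Taking $D_2$ to be $D_1$ after one such move on an empty triangle yields a Reidemeister triangle whose endpoint-free side contains no vertex, refuting both your base case and your target conclusion.

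The inductive step is also unsound, not merely delicate. The deferred ``key step'' --- that the corner triangle $R'$ on $e,f,h$ inherits the order-reversal from $R$ --- is false: starting from a configuration in which $h$ cuts off the corner $\times_{e,f}$ and nothing else lies in $R$, the move sequence (slide $h$ out across $\times_{e,f}$, slide $g$ across $\times_{e,f}$ to flip $R$, slide $h$ back) produces a $D_2$ with the same rotation scheme in which $R$ is reversed but the $e,f,h$ triangle is not. So reversal of the outer triangle carries no information about reversal of the inner one, and the induction has nothing to pass down. The paper's actual proof runs in the opposite direction with different tools: assuming a vertex $a$ lies in the endpoint-free side of $D_1[R]$, fact \ref{it:rotationK4} places $a$ in the corresponding 3-faces $T^2_{u,y}$, $T^2_{r,v}$, $T^2_{x,s}$ of the three $K_4$'s in $D_2$; the reversed cyclic order forces these faces to pairwise meet without containment, so their boundaries pairwise cross at least twice, giving the 6-cycle $rvuyxs$ the maximum of nine crossings in $D_2$; goodness then constrains the picture so tightly that the edge $ay$ cannot be drawn. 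If you rework this, discard the induction and prove the no-vertex statement via that counting-plus-goodness analysis.
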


\begin{cproof}  
Let $R$ be a Reidemeister triangle in $D_1[K_n]$ for $D_1$ and $D_2$. We use the same labelling $e=xy$, $f=uv$, and $g=rs$ as above for the edges determining $R$; all of $r$, $s$, $u$, $v$, $x$, and $y$ are in the same face $F$ of $D_1[R]$.  By way of contradiction, suppose there is a vertex $a$ of $K_n$ in the other face $F_a$ of $D_1[R]$.     See the left-hand figure in Figure \ref{fg:vxInReidTriangle1}.

\begin{figure}[!ht]
\begin{center}
\includegraphics[scale=.4]{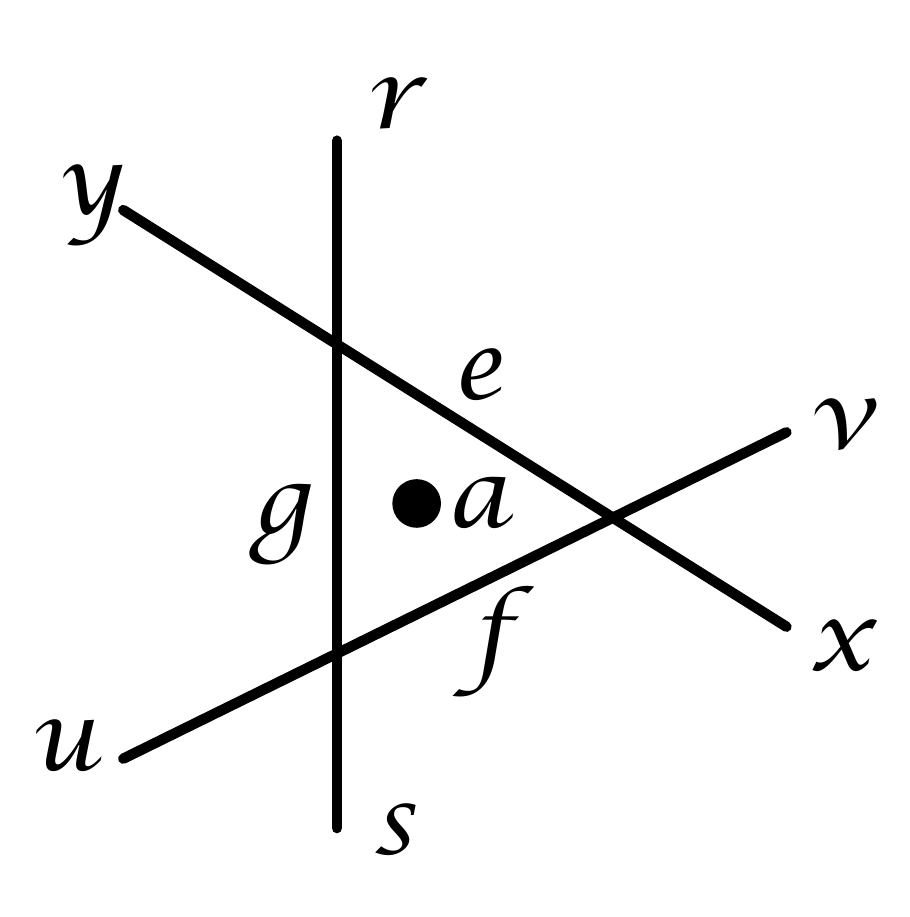}\hskip .5truein
\includegraphics[scale=.4]{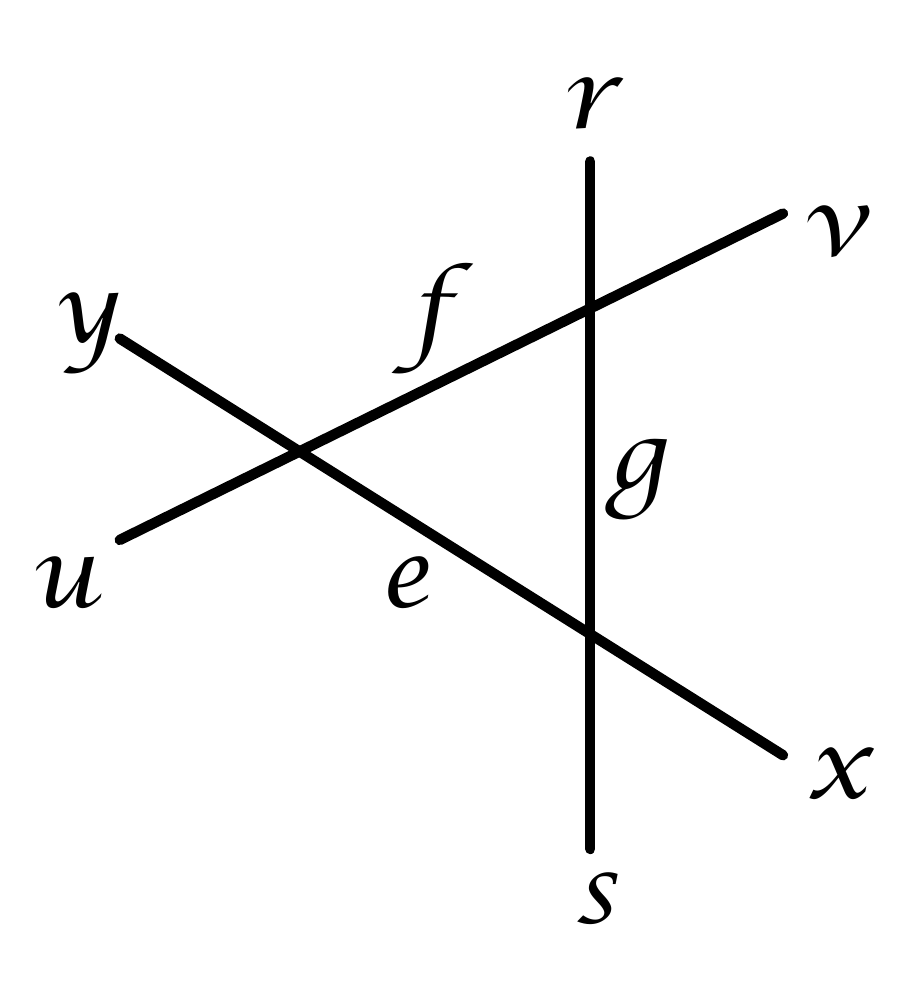}
\caption{The Reidemeister triangle in $D_1$ and $D_2$.}\label{fg:vxInReidTriangle1}
\end{center}
\end{figure}

In the $K_4$ induced by $\{u,v,x,y\}$, $a$ is in the 3-face $T^1_{u,y}$ bounded by $uy\times_{e,f}^1$ and, therefore, in the discs bounded by the 3-cycles $uyx$ and $yuv$ that do not contain $D_1[v]$ and $D_1[x]$, respectively.  By \ref{it:rotationK4}, this holds true also for $D_2$.  Analogous statements hold for the other two $K_4$\!'s involving two of the three edges from $e,f,g$.  

Using the labelling described above for $D_2$, the faces $T^{\changes{\textrm\,2}}_{u,y}$, $T^{\changes{\textrm\,2}}_{r,v}$, and $T^{\changes{\textrm\,2}}_{x,s}$ are bounded by $uy\times_{e,f}^2$, $rv\times_{f,g}^2$, and $xs\times_{e,g}^2$, respectively. %In the same order, they are disjoint from $D_2[r]$, $D_2[x]$, and $D_2[u]$, respectively.   
Moreover, $a$ is in all three of the faces $T^{\changes{\textrm\,2}}_{u,y}$, $T^{\changes{\textrm\,2}}_{r,v}$, and $T^{\changes{\textrm\,2}}_{x,s}$, so no two of them are disjoint.

In the same $K_4$ induced by $\{u,v,x,y\}$, in both $D_1$ and $D_2$, $yx$ crosses $uv$.  In $D_2$, as we traverse $uv$ from $u$,  we first travel along the boundary of $T^2_{u,y}$, then pass through $\times^2_{e,f}$, followed by $\times^2_{f,g}$, showing that $\times^2_{f,g}$ is separated by $uy\times^2_{e,f}$ from $a$ and, therefore, $T^2_{rv}$ is not contained in $T^2_{uy}$.

By symmetry, this works for all pairs from $T^2_{rv}$, $T^2_{uy}$, and $T^2_{xs}$.  Since no two are disjoint,  we deduce that any two of $rv\times_{f,g}^2$, $uy\times_{e,f}^2$, and $xs\times_{e,g}$ intersect.  Since they intersect each other an even number of times, they intersect each other at least twice.

Therefore, the 6-cycle $rvuyxs$  has at least nine crossings in $D_2$, consisting of the three that define $R$ and the at least six mentioned at the end of the preceding paragraph.  Since nine is the most crossings a 6-cycle can have in a good drawing, we conclude that it is exactly nine.  Thus, any two of \changer{$uy\times_{e,f}^2$, $rv\times_{f,g}^2$, and $xs\times_{e,g}^2$}  cross exactly twice.  Moreover, every pair of non-adjacent edges in the 6-cycle must cross.   In particular, $rv$ crosses $uy$.

When we consider the two crossings of $uy\times_{e,f}^2$ and $rv\times_{f,g}^2$, for example, one of them is $rv$ crossing $uy$.  Since $e$, $f$, and $g$ pairwise cross at the corners of $R$, no two of them can provide the second crossing of $uy\times_{e,f}^2$ and $rv\times_{f,g}^2$.  Therefore, the second crossing involves either $rv$ or $uy$.  That is, either $rv$ crosses \changer{$uy\times_{e,f}^2$} twice or $uy$ crosses \changer{$rv\times_{f,g}^2$} twice.  

Since these conclusions are symmetric, we may assume the former.  The final piece of information that we require is the order in which these two crossings occur.  By way of contradiction, suppose that, as we traverse $D_2[rv]$ from $D_2[v]$,  we first cross the \changer{$xy$-segment of $uy\times_{e,f}^2$} before crossing $uy$.  See Figure \ref{fg:wrongOrder}.

\begin{figure}[!ht]
\begin{center}
\includegraphics[scale=.4]{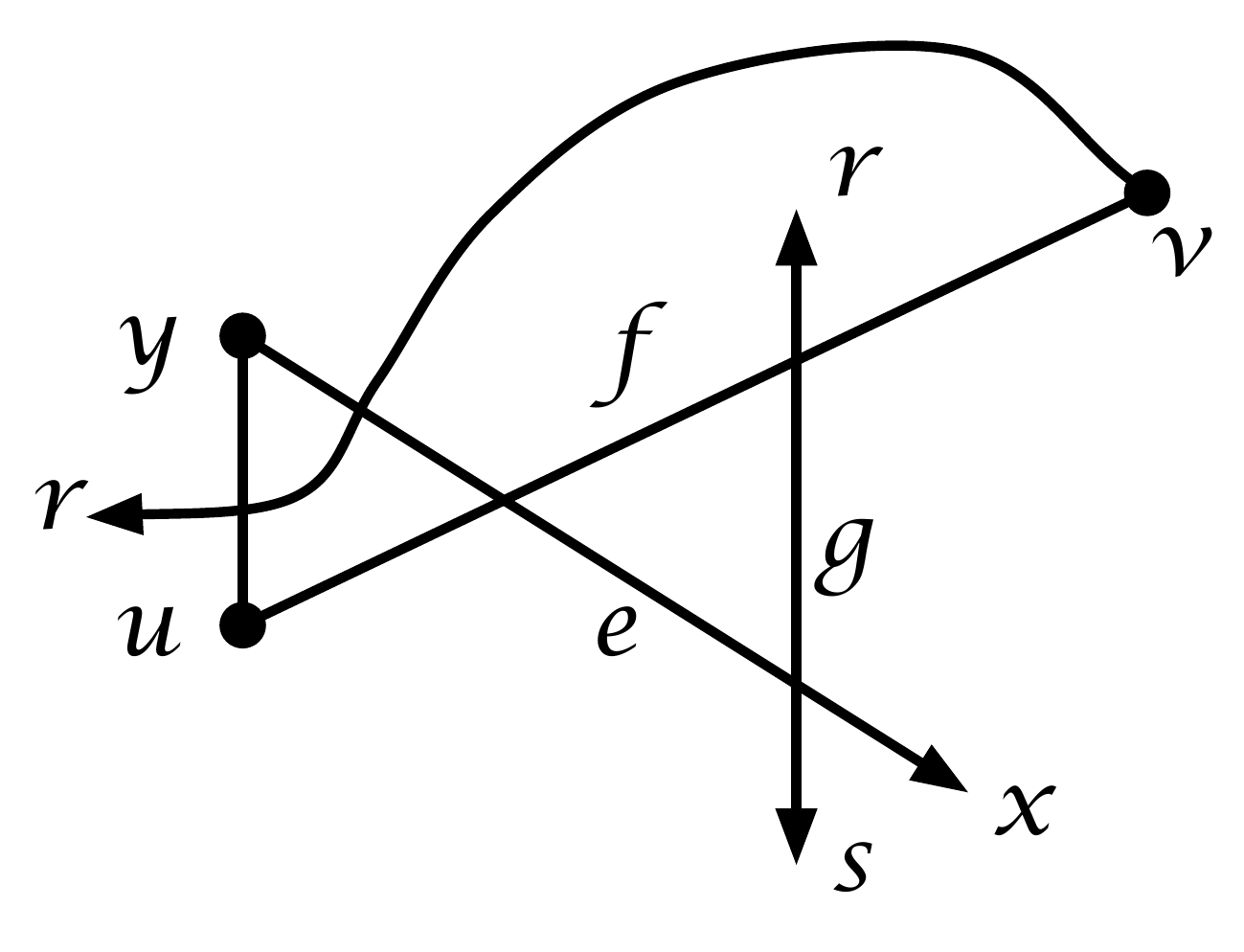}
\caption{$D_2[rv]$ crosses $T^2_{u,y}$ in the wrong order.}\label{fg:wrongOrder}
\end{center}
\end{figure}

Consider the simple closed curve $\Omega$ consisting of the arc in $D_2[uv]$ from $\times_{e,f}^2$ to $D_2[v]$, then along $D_2[rv]$ from $D_2[v]$ to the crossing of $D_2[rv]$ with the \changer{$xy$-segment of $uy\times_{e,f}^2$}, and then along $D_2[xy]$ back to $\times_{e,f}^2$.  

By goodness, the portion of $D_2[rs]$ from $\times_{f,g}^2$ to $D_2[r]$ cannot cross $\Omega$, so $D_2[r]$ is on the side of $\Omega$ that is different from the side containing the crossing of $rv$ with $uy$.  Again, goodness forbids the crossing of $\Omega$ with the portion of $D_2[rv]$ from $r$ to the crossing with $uy$.  This contradiction shows that the first crossing of \changer{$uy\times_{e,f}^2$} by $D_2[rv]$, as we start at $v$, is with $uy$.  See Figure \ref{fg:rightOrder}.

\begin{figure}[!ht]
\begin{center}
\includegraphics[scale=.4]{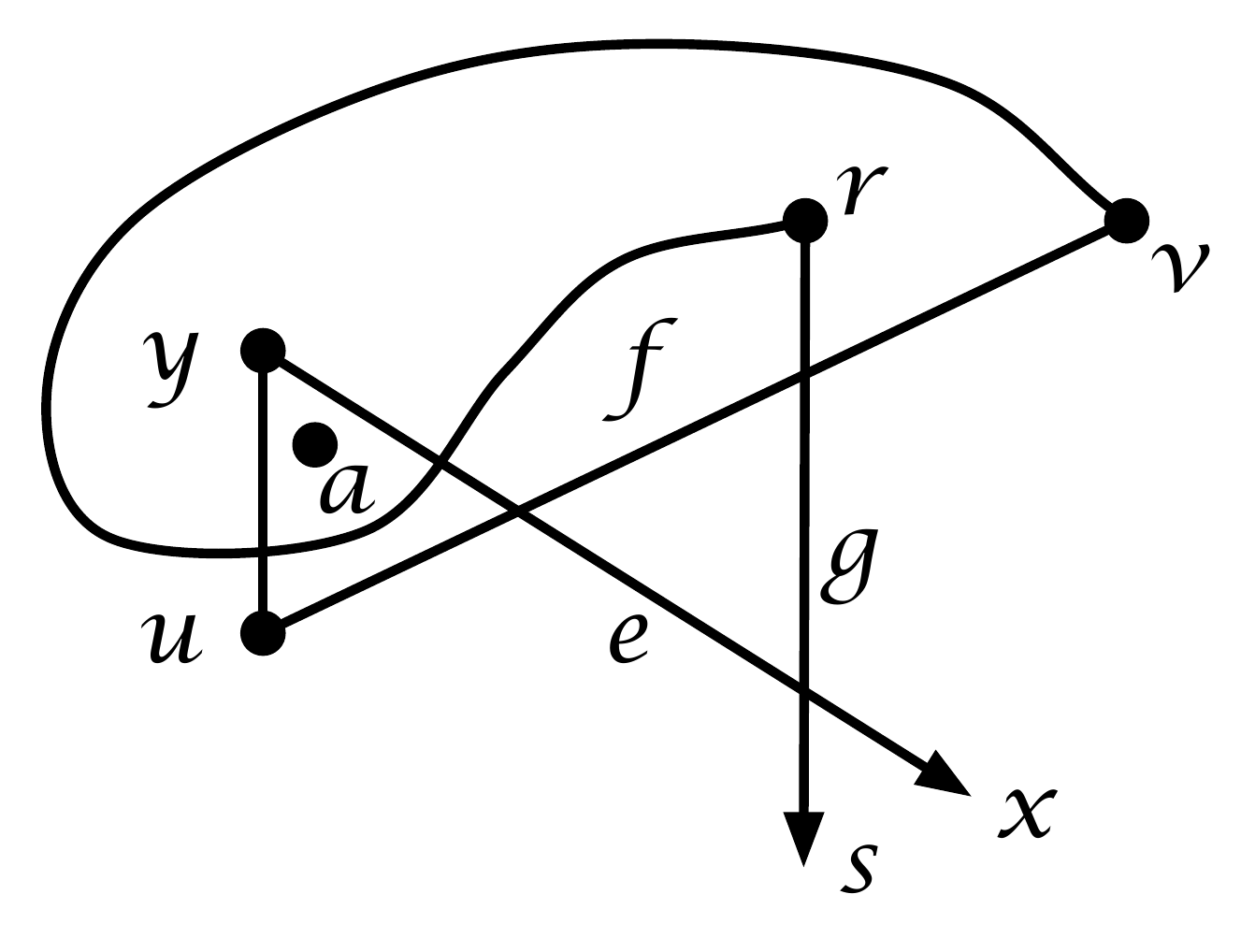}
\caption{This is how $D_2[rv]$ crosses $T^2_{u,y}$.}\label{fg:rightOrder}
\end{center}
\end{figure}

The vertex $a$ is in $T^2_{r,v}\cap T^2_{u,y}$.  As $D_1[a]$ and $D_1[y]$ are on different sides of  $D_1[R]$,  $D_1[ay]$ crosses at least one of $D_1[rs]$, $D_1[uv]$, and $D_1[xy]$.  Thus, \ref{it:rotationCrossing} implies  $D_2[ay]\not\subseteq T_{u,y}^2$.

Goodness implies that $D_2[ay]$ must cross the \changer{$uv$-segment of $uy\times_{e,f}^2$}.  In order to do that, it must cross $rv$ first.  But now $y$ and the crossing $\times$ of $D_2[ay]$ with $D_2[uv]$ are separated by the simple closed curve $\Omega'$ consisting of the portion of $uv$ from $\times_{e,f}^2$ to $v$, $rv$ from $v$ to its crossing with $xy$, and the portion of $xy$ between this crossing and $\times_{e,f}^2$.  

However, the portion of $ay$ from $\times$ to $y$ cannot cross any of the three parts of $\Omega'$, because \changer{each} part is contained either in an edge incident with $y$ or is crossed by the complementary part of $ay$.  This contradiction completes the proof.
\end{cproof}

We are now ready to prove Gioan's Theorem.   The structure of our proof is very much the same as that given by the algorithm in \cite{gioan}.

\begin{cproofof}{Theorem \ref{th:gioan}}
Label the vertices of $K_n$ as $v_1,v_2,\dots,v_n$.  For each $i=1,2,\dots,n$, let $K_i$ denote the complete subgraph induced by $v_1,v_2,\dots,v_i$.   We shall show, by induction on $i$, that there is a sequence $\Sigma_i$ of Reidemeister moves so that, if $D^i_1$ is the drawing of $K_n$ obtained by performing the moves $\Sigma_i$ on $D_1[K_n]$, then there is an orientation-preserving homeomorphism of the sphere that maps $D^i_1[K_i]$ onto $D_2[K_i]$ (of course preserving the labels $v_1,\dots,v_i$).  

The claim is trivial for $i<4$ and is  \ref{it:rotationK4} for $i=4$.  Thus, we may assume $i\ge 5$ and the result holds for $i-1$.  In particular, replacing $D_1$ with $D^{i-1}_1$, we may assume $D_1[K_{i-1}]$ is the same as $D_2[K_{i-1}]$.  For ease of notation and reference, we will use $K_{i-1}$ to also denote this common drawing of $K_{i-1}$.  We may assume that, for $\alpha=1,2$, $D_\alpha[K_i]$ is obtained from $K_{i-1}$ by using dual paths for each edge $v_iv_j$ ($j\in \{1,2,\dots,i-1\}$), together with a segment in the last face to get from the dual vertex in that face to $v_j$. 

This understanding needs a slight refinement, since, for example, it is possible for two edges incident with $v_i$ to use the same sequence of faces (in whole or in part).  Thus, as dual paths, they would actually use the same segments.  We allow this, as long as the two edges do not cross on the common segments.  They can be slightly separated at the end to reconstruct the actual drawing.
  
Since each face of $K_{i-1}$ is the intersection of all the discs bounded by 3-cycles that contain the face, \ref{it:triangleContainment} shows that $v_i$ is in the same face of $K_{i-1}$ in both $D_1$ and $D_2$.
If there is an orientation-preserving homeomorphism of the sphere that maps $D_1[K_i]$ onto $D_2[K_i]$, then we are already done, so we may assume there is some least $j\in \{1,2,\dots,i-1\}$ such that $D_1[v_iv_j]$ and $D_2[v_iv_j]$ use different dual paths in $K_{i-1}$.  Let $F_1,F_2,\dots,F_r$ be the faces of $K_{i-1}$ traversed by $D_2[v_iv_j]$.

Each $F_k$ is (essentially) a union of faces of $D_1[K_n]$.  The (planar) dual of the graph in $F_k$ is connected, so there are  paths  in each $F_k$ to obtain a dual path in $D_1[K_n]$ that restricts to the dual path of $K_{i-1}$ representing $D_2[v_iv_j]$.  We will refer to this dual path in $D_1[K_n]$ as $D^*_2[v_iv_j]$.  Our objective will be to find a sequence of Reidemeister moves in $D_1[K_n]$ to make a drawing $D^j_1[K_n]$ such that there is an orientation-preserving homeomorphism of the sphere to itself that maps $D^j_1[K_{i-1}]$ plus the edges $v_iv_1,\dots,v_iv_j$ onto $D_2[K_{i-1}]$ plus the edges $v_iv_1,\dots,v_iv_j$.

The construction shows that $D_1[v_iv_j]\cup D^*_2[v_iv_j]$ is a closed curve $C^i_j$ with finitely many common segments (which might just be $v_i$, $v_j$ and single dual vertices).  In particular, $C^i_j$ divides the sphere into finitely many regions.

\begin{claim}\label{cl:noVertexSeparation} All the vertices of $K_{i-1}-\{v_j\}$ are in the same region of $C^i_j$.
\end{claim}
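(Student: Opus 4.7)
The plan is a parity/Jordan-curve argument. Fix two vertices $v_k,v_\ell\in V(K_{i-1})\setminus\{v_j\}$. I shall show that the edge-image $D_1[v_kv_\ell]$ meets the closed curve $C^i_j$ in an even number of points, so that $v_k$ and $v_\ell$ lie in the same region of $\sphere\setminus C^i_j$. Applied to every such pair, this immediately gives the Claim (note there are $i-2\ge 3$ such vertices since $i\ge 5$).

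I would split the intersections $D_1[v_kv_\ell]\cap C^i_j$ into those with the primal arc $D_1[v_iv_j]$ and those with the dual-path arc $D^*_2[v_iv_j]$. Since $v_i,v_j,v_k,v_\ell$ are pairwise distinct, $v_kv_\ell$ and $v_iv_j$ are vertex-disjoint edges of $K_n$, and goodness of $D_1$ forces $|D_1[v_kv_\ell]\cap D_1[v_iv_j]|\in\{0,1\}$.

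Next I would count intersections with $D^*_2[v_iv_j]$. By construction $D^*_2[v_iv_j]$ is a dual path in $D_1[K_n]$ that crosses each arc at most once, so this number too is $0$ or $1$. By the construction of $D^*_2[v_iv_j]$, its restriction to the arrangement $K_{i-1}$ is the dual path $(F_1,\dots,F_r)$ representing $D_2[v_iv_j]$; hence the number of times $D^*_2[v_iv_j]$ crosses the $K_{i-1}$-edge $v_kv_\ell$ equals the number of times $D_2[v_iv_j]$ crosses $D_2[v_kv_\ell]$ in $D_2$. Applying \ref{it:rotationCrossing}, this last count equals the number of times $D_1[v_iv_j]$ crosses $D_1[v_kv_\ell]$ in $D_1$, which is exactly the count from the previous paragraph. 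The two subcounts therefore coincide, so their sum $|D_1[v_kv_\ell]\cap C^i_j|$ is $0$ or $2$, always even.

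A standard parity argument then places $v_k$ and $v_\ell$ on the same side of $C^i_j$. The main obstacle I anticipate is the technicality that $C^i_j$ need not be a simple closed curve, because its two halves can share segments. I would handle this by perturbing the two halves in small neighbourhoods of the shared segments so that $C^i_j$ becomes a simple closed curve separating $\sphere$ into two open discs; the perturbation is local and transverse, and so preserves all of the intersection parities computed above.
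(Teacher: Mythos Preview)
Your argument is correct in the ``count $=0$'' case: when $D_1[v_kv_\ell]$ meets neither $D_1[v_iv_j]$ nor $D^*_2[v_iv_j]$, the edge itself is a path in $\sphere\setminus C^i_j$ joining $v_k$ to $v_\ell$. This is exactly the paper's first case.

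The gap is in the ``count $=1$ each'' case. You conclude that $D_1[v_kv_\ell]$ meets $C^i_j$ evenly and then invoke a Jordan--curve parity argument, but $C^i_j$ need not be a simple closed curve, and your perturbation fix does not repair this. Beyond shared segments, the two halves of $C^i_j$ can cross one another \emph{transversally}: $D^*_2[v_iv_j]$ is a dual path in $D_1[K_n]$ and may well cross the edge $D_1[v_iv_j]$; more structurally, whenever the two $K_{i-1}$-dual paths both traverse a face $F$ of $K_{i-1}$ with their entry/exit sides interleaved along $\partial F$, they are forced to cross inside $F$, and no local perturbation removes this. For a closed curve with genuine self-crossings (think of a figure-eight, whose complement has three regions), an arc meeting it an even number of times can still join points in distinct complementary regions. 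So even intersection parity does not yield ``same region''.

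The paper handles this case by producing an explicit connecting arc rather than by parity. It looks at the $K_4$ $J$ induced by $v_i,v_j,v_k,v_\ell$, takes the $4$-face of $D_1[J]$, and routes an arc $\gamma$ from $v_k$ to $v_\ell$ hugging the path $(v_k,v_j,v_\ell)$. Because the rotations at $v_j$ agree in $D_1$ and $D_2$, both $D_1[v_iv_j]$ and $D^*_2[v_iv_j]$ leave $v_j$ in the same angle of $K_{i-1}$, on the side of $(v_k,v_j,v_\ell)$ opposite $\gamma$; neither crosses $v_kv_j$ or $v_\ell v_j$, so $\gamma$ avoids all of $C^i_j$. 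That arc, not a parity count, certifies that $v_k$ and $v_\ell$ lie in the same region.
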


\begin{proof}
Let $x$ and $y$ be vertices of $K_{i-1}-\{v_j\}$.  If $xy$ does not cross $D_1[v_iv_j]$, then it also does not cross $D_2[v_iv_j]$; thus it also does not cross $D^*_2[v_iv_j]$.  It follows that $xy$ is disjoint from $C^i_j$, showing that $x$ and $y$ are in the same region of $C^i_j$.

Thus, we may assume that $xy$ crosses $D_1[v_iv_j]$.  Then it also crosses $D_2[v_iv_j]$ and, therefore, $D^*_2[v_iv_j]$.  Letting $J$ be the $K_4$ induced by $v_i,v_j,x,y$, both $D_1[J]$ and $D_2[J]$ have $v_iv_j$ crossing $xy$.  There is a unique face $F$ of $D_1[J]$ bounded by a 4-cycle in $J$.  There is an $xy$-arc $\gamma$ in $F$ that goes very near alongside the path $P=(x,v_j,y)$ and is disjoint from $D_1[v_iv_j]$. 

As the rotations are the same, $D_1[v_iv_j]$ and $D^*_2[v_iv_j]$ both  start in the same angle of $v_j$ in $K_{i-1}$.  Thus, $\gamma$ is also disjoint from $D^*_2[v_iv_j]$, so $x$ and $y$ are in the same region of $C^i_j$. \end{proof}

A {\em $j$-digon\/} is a simple closed curve in $C^i_j$ consisting of a subarc of $D_1[v_iv_j]$ and a subarc of $D^*_2[v_iv_j]$.  If $D_1[v_iv_j]\ne D^*_2[v_iv_j]$, then some point $z$ of $D^*_2[v_iv_j]$ is not in $D_1[v_iv_j]$.  Traverse in both directions in $D^*_2[v_iv_j]$ from $z$ until first reaching $D_1[v_iv_j]$; adding the segment of $D_1[v_iv_j]$ between these two points produces a $j$-digon.   By Claim \ref{cl:noVertexSeparation}, each $j$-digon $C$ bounds a closed disc that is disjoint from $\{v_1,v_2,\dots,v_{i-1}\}$; this is the {\em clean side\/} of $C$.

To complete the induction, we show that there is a sequence $\Gamma_{i,j}$ of Reidemeister moves such that, in the drawing $D^{i,j}_1[K_n]$  obtained by doing the sequence $\Gamma_{i,j}$ to $D_1[K_n]$, $D^{i,j}_1[K_{i-1}]= D_2[K_{i-1}]$ and also all the edges $v_iv_1,\dots, v_iv_j$ are the same in both $D^{i,j}_1[K_i]$ and $D_2[K_i]$.
Since $D_1[v_iv_j]$ and $D^*_2[v_iv_j]$ use different dual sequences (relative to $K_{i-1}$), there is a $j$-digon.   

Lemma \ref{lm:crossingOrderDetermines} shows that the edges $D_1[v_iv_j]$ and $D^*_2[v_iv_j]$ cross the same edges of $K_{i-1}$, but not in the same order.    Among all the $j$-digons, let $C$ be one having a minimal clean side $S$.  Thus, no other $j$-digon has its clean side contained in $S$.  If $xy$ is an edge of $K_{i-1}-\{v_j\}$ that intersects $S$, then Claim \ref{cl:noVertexSeparation} implies $xy\cap S$ consists of a single arc having one end in $D_1[v_iv_j]$ and the other end in $D^*_2[v_iv_j]$.  

Lemma \ref{lm:reidemeister} shows that there is a sequence $\Pi$ of Reidemeister moves in $K_{i-1}\cup C$, each involving $D_1[v_iv_j]$, that removes all crossings from $S$; at that point $C\cap D_1[v_iv_j]$ and $C\cap D^*_2[v_iv_j]$ use the same dual path (relative to $K_{i-1}$).    We prove that there is a sequence $\Pi'$ of Reidemeister moves that apply to $D_1[K_n]$ and performs the same effect, but in $D_1[K_n]$, of making $C\cap D_1[v_iv_j]$ use the same dual path (relative to $K_{i-1}$) as $C\cap D^*_2[v_iv_j]$.  The sequence $\Pi'$ includes $\Pi$ as a subsequence; the remaining moves in $\Pi'$ all involve some edge not in $K_{i-1}$ and not among the edges $v_iv_1,\dots,v_iv_j$.  In particular, these additional moves do not affect the drawing of either $K_{i-1}$ or the edges $v_iv_1,\dots,v_iv_j$.  This is clearly enough to complete the induction.

Suppose $\Pi=\pi_1,\pi_2,\dots,\pi_r$ and, for some $s\in\{1,\dots,r\}$, we have found such a sequence $\Pi'_{s-1}$ of moves that contains $\pi_1,\pi_2,\dots,\pi_{s-1}$ as a subsequence; we may suppose $\Pi'_{s-1}$ terminates with $\pi_{s-1}$.  In particular, $\Pi'_0$ is the empty sequence.  Let $D^{s-1}_1[K_n]$ be the drawing of $K_n$ obtained by performing the sequence $\Pi'_{s-1}$ on $D_1[K_n]$.  

The move $\pi_s$ consists of operating on a Reidemeister triangle $R_s$ inside $S$ involving the three edges $e,f,g$.  For each move in $\Pi$, and in particular for $\pi_s$, one of the edges is $D_1[v_iv_j]$; we choose $e$ to be this edge.  Thus, $f$ and $g$ are in $K_{i-1}$.  The move $\pi_s$ involves moving the crossing of $f$ with $g$ across $e$ so that it is now outside $S$.  Therefore, $f$ and $g$ cross inside $R_s$ and so $f$ and $g$ cross $C\cap D^{s-1}_1[v_iv_j]$ and $C\cap D^*_2[v_iv_j]$ in different orders.  Thus, $R_s$ is a Reidemeister triangle for the drawings $D^{s-1}_1[K_n]$ and $D_2[K_n]$.

Lemma \ref{lm:differentOrderReidTriang} shows that no vertex of $K_n$ is inside $R_s$.  None of the edges in $K_{i-1}$ and $\{v_iv_1,\dots,v_iv_j\}$ goes into $R_s$.  Every other edge intersects each side of $R_s$ at most once and intersects $R_s$ an even number of times.  Every other edge that crosses $R_s$ makes a pre-Reidemeister triangle inside $R_s$.     We claim that there is a sequence $\Omega$ of Reidemeister moves that empties $R_s$ and involves moving only these other edges.  

An easy induction shows that if $\alpha$ and $\beta$ cross inside $R_s$, then there is a sequence of Reidemeister moves available to push their crossing over any of the edges $e,f,g$ that they both cross.  

Thus, there is a sequence $\Omega$ of Reidemeister moves that involves moving only these other edges and that empties $R_s$, at which point we may perform the move $\pi_s$.  Thus, $\Pi'_s=\Pi'_{s-1}\Omega\pi_s$ is the required sequence of moves on $D_1[K_n]$.  

It follows that there is a sequence $\Theta$ of Reidemeister moves on $D_1[K_n]$ that produces a drawing $D'_1$ of $K_n$ such that $D'_1[v_iv_j]$ and $D^*_2[v_iv_j]$ have the same dual sequence with respect to $K_{i-1}$.  Therefore,  $D'_1[v_iv_j]$ and $D_2[v_iv_j]$ have the same dual sequence with respect to $K_{i-1}$.  Lemma \ref{lm:crossingOrderDetermines} implies that there is an orientation-preserving homeomorphism of the sphere to itself that maps $D'_1[K_{i-1}+\{v_iv_1,\dots,v_iv_j\}]$ to $D_2[K_{i-1}+\{v_iv_1,\dots,v_iv_j\}]$, as required.  

Thus, by induction on $j$ there is a sequence of Reidemeister moves on $D_1[K_n]$ to make a new drawing $D'_1[K_n]$ such that there is an orientation-preserving homeomorphism of the sphere to itself that maps $D'_1[K_{i}]$ to $D_2[K_{i}]$.  Finally, induction on $i$ shows that there is a sequence of Reidemeister moves on $D_1[K_n]$ to produce a drawing $D^*_1[K_n]$ and an orientation-preserving homeomorphism of the sphere to itself that maps $D^*_1[K_{n}]$ to $D_2[K_{n}]$, which is precisely Theorem \ref{th:gioan}.
\end{cproofof}

\bigskip

{\bf \Large Acknowledgments}

\bigskip
We appreciate the friendly discussions with Emeric Gioan.  We thank Stefan Felsner for his insightful comments; in particular, he pointed out a significant oversight in our original proof.

\end{document}